\documentclass{amsart}
\usepackage{graphicx}

\newtheorem{theorem}{Theorem}[section]
\newtheorem*{theorem A}{Theorem A}
\newtheorem*{theorem B}{N\"olker's Theorem}
\newtheorem{lemma}{Lemma}[section]

\newcommand{\N}{{\mathbb N}}
\newcommand{\Z}{{\mathbb Z}}

\theoremstyle{remark}
\newtheorem{remark}{Remark}[section]
\theoremstyle{remark}

\theoremstyle{definition}

\newtheorem{definition}{Definition}[section]

\numberwithin{equation}{section}
\def\({\left ( }
\def\){\right )}
\def\<{\left < }
\def\>{\right >}


 \setcounter{page}{1}
\begin{document}

\vspace{2cm}

\title{Identities involving Narayana numbers}

\author[G. Cerda-Morales]{Gamaliel Cerda-Morales}
\address{Instituto de Matem\'aticas, Pontificia Universidad Cat\'olica de Valpara\'iso, Blanco Viel 596, Valpara\'iso, Chile.}
\email{gamaliel.cerda.m@mail.pucv.cl}


\subjclass[2000]{11B37, 11B83, 15A36.}



\keywords{$a$-columns Narayana table, third order recurrence, Narayana numbers.}

\begin{abstract}
Narayana's cows problem is a problem similar to the Fibonacci's rabbit problem. We define the numbers which are the solutions of this problem as Narayana's cows numbers. Narayana's cows sequence satisfies the third order recurrence relation $N_{r}=N_{r-1}+N_{r-3}$ ($r\geq3$) with initial condition $N_{0} =0$, $N_{1} = N_{2}= 1$. In this paper, the $am+b$ subscripted Narayana numbers will be expressed by three $a$ step apart Narayana numbers for any $1\leq b\leq a$ ($a\in \Z$). Furthermore, we study the sum $S_{N,r}^{(4,b)}=\sum_{k=0}^{r}N_{4k+b}$ of $4$ step apart Narayana numbers for any $1\leq b\leq 4$.
\end{abstract}
\maketitle

\section{Introduction}
The Narayana sequence was introduced by the Indian mathematician Narayana in the 14th century, while studying the following problem of a herd of cows and calves: A cow produces one calf every year. Beginning in its fourth year, each calf produces one calf at the beginning of each year. How many calves are there altogether after 20 years? (cf. \cite{Al-Jo}).

This problem can be solved in the same way that Fibonacci solved its problem about rabbits (cf. \cite{Ko}). If $r$ is the year, then the Narayana problem can be modelled by the recurrence
\begin{equation}
N_{r+1}=N_{r}+N_{r-2},\ N_{0}=0,\ N_{1}=N_{2}=1\ \ (r\geq2).
\end{equation}
The first few terms are $\{N_{r}\}_{r\geq0}=\{0,1,1,1,2,3,4,6,9,13,...\}$. This sequence is called Narayana sequence. The sequences $\{N_{r}\}_{r\geq0}$ can be defined for negative values of $r$ by using the definition $N_{-(s+1)}=-N_{-(s-1)}+N_{-(s-2)}$ ($s\geq2$) with initial conditions $N_{0}=N_{-1}=0$ and $N_{-2}=1$.

A number of properties of the Narayana sequence were studied in \cite{Ra-Si} using matrix methods and their generalization called $k$-Narayana sequence was studied. In \cite{Bi}, Bilgici defined a new recurrence which is called generalized order-$k$ Narayana's cows sequence and by using this generalization and some matrix properties, he gave some identities related to the Narayana's cows numbers. In this work we shall generalize the identities about $ar$ subscripted Narayana numbers $N_{ar}$ to any $N_{ar+b}$ ($1\leq b<a$). One of our main theorem is to express $N_{ar+b}$ by $N_{2a+b}$, $N_{a+b}$ and $N_{b}$, which are $a$ step apart terms.

\section{Narayana table}
For $a\in \N$, when we say $a$ columns Narayana table we mean a rectangle shape having $a$ columns that consists of the all Narayana numbers from $N_{1}$ in order. So,
\begin{equation}
\left[ 
\begin{array}{cccc}
N_{1} & N_{2} & ... & N_{a} \\ 
N_{a+1} & N_{a+2} & ... & N_{2a} \\ 
N_{2a+1} & N_{2a+2} & ... & N_{3a} \\ 
... & ... & ... & ...
\end{array}
\right] .
\end{equation}

We shall investigate a third order linear recurrence $N_{m}=p_{a}N_{m-a}+q_{a}N_{m-2a}+N_{m-3a}$ for Narayana numbers with some $p_{a},q_{a}\in \Z$. 

\begin{lemma}\label{lem1}
$N_{m}=N_{m-2}+2N_{m-4}+N_{m-6}$, $N_{m}=4N_{m-3}-3N_{m-6}+N_{m-9}$ and $N_{m}=5N_{m-4}-2N_{m-8}+N_{m-12}$ for any $m\in \Z$.
\end{lemma}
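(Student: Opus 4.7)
My plan is to exploit the fact that any $\Z$-linear combination of shifted Narayana numbers $\sum c_i N_{m-j_i}$ automatically satisfies the defining third-order recurrence $x_r = x_{r-1} + x_{r-3}$, because each individual shifted term does. An identity of the form $N_m = p\, N_{m-a} + q\, N_{m-2a} + r\, N_{m-3a}$ therefore reduces to a finite check: once both sides agree at three consecutive values of $m$, the shared recurrence forces them to agree for all larger $m$; and since the recurrence can be run backwards via $N_{r-3}=N_r-N_{r-1}$, the equality extends in fact to every $m\in\Z$, which is what the lemma asserts.

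To execute this, I would tabulate the first values
\[ N_0,\ldots,N_{14} \;=\; 0,\,1,\,1,\,1,\,2,\,3,\,4,\,6,\,9,\,13,\,19,\,28,\,41,\,60,\,88. \]
For the first identity I would verify it at $m=6,7,8$; for the second at $m=9,10,11$; and for the third at $m=12,13,14$. Each verification is a one-line arithmetic check (e.g.\ $N_8 = 9 = 4 + 4 + 1 = N_6 + 2N_4 + N_2$), and together they dispatch the lemma.

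The only conceptual point is \emph{how} one arrives at the coefficient triples $(1,2,1)$, $(4,-3,1)$, $(5,-2,1)$ for $a=2,3,4$. A clean explanation, which I would include as motivation, is the following. Let $\alpha,\beta,\gamma$ be the roots of the characteristic polynomial $x^3-x^2-1$; Vieta gives $\alpha+\beta+\gamma=1$, $\alpha\beta+\alpha\gamma+\beta\gamma=0$, and $\alpha\beta\gamma=1$. The asserted identity is equivalent to $\alpha^a,\beta^a,\gamma^a$ being the roots of $y^3-p\,y^2-q\,y-r=0$, so applying Vieta again yields $p=\sum\alpha^a$, $-q=\sum(\alpha\beta)^a$, and $r=(\alpha\beta\gamma)^a=1$. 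These symmetric functions are then computed by Newton's identities, reproducing the stated coefficients. I anticipate no real obstacle; the only mild pitfall is the sign convention on $q$, and it is bypassed entirely by the direct inductive verification described in the previous paragraph.
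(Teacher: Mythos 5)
Your proof is correct, and it is the same mechanism as the paper's proof in different clothing: the paper verifies one numerical instance of each identity and then runs a strong induction whose inductive step ($N_m=N_{m-1}+N_{m-3}$, expand each term by the hypothesis, regroup) is precisely your observation that the right-hand side satisfies the defining recurrence. Your packaging, however, is tidier and in two respects more rigorous. First, you check \emph{three} consecutive base values per identity, which is exactly what a three-term recurrence demands; the paper checks only one (e.g.\ $N_6=N_4+2N_2+N_0$), and its induction step reaches back to $t=m-1$ and $t=m-3$, so strictly speaking it needs the identity already established at several earlier indices (some involving negatively indexed terms) before the induction can start. Second, you explicitly note that the recurrence can be run backwards via $N_{r-3}=N_r-N_{r-1}$, which is what actually delivers the claim for all $m\in\Z$ as stated; the paper's induction only moves forward. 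Your Vieta/Newton derivation of the coefficient triples $(1,2,1)$, $(4,-3,1)$, $(5,-2,1)$ as power sums of $\alpha^a,\beta^a,\gamma^a$ (roots of $x^3-x^2-1$) is genuinely additional content not present in the paper: it explains \emph{why} the trailing coefficient is always $1$ (namely $(\alpha\beta\gamma)^a=1$) and would let one generate the table of Theorem 2.1 systematically rather than by inspection of Narayana tables. You correctly flag that this derivation is only motivation and that the finite verification carries the proof, so the tacit use of the Binet form (which requires the roots to be distinct) is not load-bearing.
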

\begin{proof}
Observe that $N_{6}=4=2+2\cdot 1=N_{4}+2N_{2}+N_{0}$. If we assume $N_{t}=N_{t-2}+2N_{t-4}+N_{t-6}$ for all $t<m$. Then,
\begin{align*}
N_{m}&=N_{m-1}+N_{m-3}\\
&=(N_{m-3}+2N_{m-5}+N_{m-7})+(N_{m-5}+2N_{m-7}+N_{m-9})\\
&=(N_{m-3}+N_{m-5})+2(N_{m-5}+N_{m-7})+(N_{m-7}+N_{m-9})\\
&= N_{m-2}+2N_{m-4}+N_{m-6}.
\end{align*}
Similar to this, we notice $N_{9}=13=4\cdot 4-3\cdot 1=4N_{6}-3N_{3}+N_{0}$. If we assume $N_{t}=4N_{t-3}-3N_{t-6}+N_{t-9}$ for all $t<m$, then the induction hypothesis proves $N_{m}=4N_{m-3}-3N_{m-6}+N_{m-9}$. 

Analogously, since $N_{12}=41=5\cdot 9-2\cdot2=5N_{8}-2N_{4}+N_{0}$, the identity $N_{m}=5N_{m-4}-2N_{m-8}+N_{m-12}$ can be proved immediately by induction.
\end{proof}

\begin{remark}
Note that the identity $N_{4r}=5N_{4(r-1)}-2N_{4(r-2)}+N_{4(r-3)}$ is a special case of $N_{m}=5N_{m-4}-2N_{m-8}+N_{m-12}$ in above Lemma \ref{lem1} when $m$ is divisible by 4 ($m=4r$), with $r\in \Z$. We extend Lemma \ref{lem1} to any integer $1\leq a\leq 8$.
\end{remark}

\begin{theorem}\label{teo1} 
Let $1\leq a\leq 8$. Then the third order recurrence $N_{m}=p_{a}N_{m-a}+q_{a}N_{m-2a}+N_{m-3a}$ of $N_{m}$\ holds with the following $(p_{a},q_{a})$.
$$
\begin{tabular}{|l|l|l|}
\hline
$a$ & $(p_{a},q_{a})$ & $
\begin{array}{cccc}
N_{m}=p_{a}N_{m-a}+q_{a}N_{m-2a}+N_{m-3a}
\end{array}
$ \\ \hline
$1$ & $(1,0)$ & $
\begin{array}{cccc}
N_{m}=N_{m-1}+N_{m-3}
\end{array}
$ \\ 
$2$ & $(1,2)$ & $
\begin{array}{cccc}
N_{m}=N_{m-2}+2N_{m-4}+N_{m-6}
\end{array}%
$ \\ 
$3$ & $(4,-3)$ & $
\begin{array}{cccc}
N_{m}=4N_{m-3}-3N_{m-6}+N_{m-9}
\end{array}
$ \\ 
$4$ & $(5,-2)$ & $
\begin{array}{cccc}
N_{m}=5N_{m-4}-2N_{m-8}+N_{m-12}
\end{array}
$ \\ 
$5$ & $(6,5)$ & $
\begin{array}{cccc}
N_{m}=6N_{m-5}+5N_{m-10}+N_{m-15}
\end{array}
$ \\ 
$6$ & $(10,-1)$ & $
\begin{array}{cccc}
N_{m}=10N_{m-6}-N_{m-12}+N_{m-18}
\end{array}
$ \\ 
$7$ & $(15,-7)$ & $
\begin{array}{cccc}
N_{m}=15N_{m-7}-7N_{m-14}+N_{m-21}
\end{array}
$ \\ 
$8$ & $(21,6)$ & $
\begin{array}{cccc}
N_{m}=21N_{m-8}+6N_{m-16}+N_{m-24}
\end{array}
$ \\ \hline
\end{tabular}
$$
\end{theorem}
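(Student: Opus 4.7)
The plan is to mirror the inductive strategy used in Lemma \ref{lem1}. The row $a=1$ of the table is simply the defining recurrence, while the rows $a=2,3,4$ are exactly the three identities already established in Lemma \ref{lem1}. What remains is to verify the four new identities corresponding to $a\in\{5,6,7,8\}$, and each of these I would prove by strong induction on $m$ in the same style.

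For a fixed $a\in\{5,6,7,8\}$, introduce the auxiliary sequence
\[
T_m := N_m - p_aN_{m-a} - q_aN_{m-2a} - N_{m-3a}.
\]
The inductive step uses only the fundamental recurrence $N_k=N_{k-1}+N_{k-3}$. Writing
\[
N_m=N_{m-1}+N_{m-3}
\]
and substituting the hypothesis $T_{m-1}=T_{m-3}=0$ into both summands, then regrouping the six resulting terms via $N_{k-1}+N_{k-3}=N_k$ applied at shifts $a$, $2a$, and $3a$, one obtains $T_m=0$. The key observation is that this step does not use the specific values of $p_a$ and $q_a$ at all: it propagates the identity automatically once a base is in place. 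Since $T_m$ itself obeys $T_m=T_{m-1}+T_{m-3}$, three consecutive vanishings suffice to anchor the induction.

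Hence the theorem reduces entirely to checking base cases. For each $a\in\{5,6,7,8\}$, I would tabulate the Narayana numbers $N_0,N_1,\ldots,N_{3a+2}$ and directly verify $T_{3a}=T_{3a+1}=T_{3a+2}=0$. Propagating forward by the recurrence then covers all $m\geq 3a$, and propagating backward via $T_{m-3}=T_m-T_{m-1}$ covers smaller $m$, using the Narayana sequence's invertibility in both directions (already acknowledged in the paper).

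The main obstacle is no more than careful arithmetic bookkeeping, which grows mildly with $a$ (for $a=8$ one needs values up to $N_{26}$). A conceptually cleaner, though less elementary, alternative is to observe that the characteristic polynomial $x^3-x^2-1$ of the Narayana recurrence has roots $\alpha,\beta,\gamma$ satisfying $\alpha\beta\gamma=1$; consequently $\alpha^a,\beta^a,\gamma^a$ are the roots of $y^3-p_ay^2-q_ay-1$ with $p_a=\alpha^a+\beta^a+\gamma^a$ and $-q_a=\alpha^{-a}+\beta^{-a}+\gamma^{-a}$. Both of these power sums satisfy linear recurrences dictated by the characteristic polynomial and its reciprocal $y^3+y-1$, and a short computation recovers every row of the table at once.
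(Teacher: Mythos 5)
Your proposal is correct and follows essentially the same route as the paper: rows $a\le 4$ are delegated to Lemma \ref{lem1}, and rows $a=5,\dots,8$ are handled by strong induction on $m$ using only $N_m=N_{m-1}+N_{m-3}$, anchored by three consecutive numerical base cases read off the $a$-columns Narayana table. Your reformulation via $T_m$ merely makes explicit what the paper leaves implicit --- that the inductive step is independent of $(p_a,q_a)$ and that three consecutive vanishings propagate in both directions --- while your closing aside about the power sums of the roots of $x^3-x^2-1$ and its reciprocal $y^3+y-1$ is a genuinely different route the paper does not take, and it neatly explains the formulas $p_a=N_a+3N_{a-2}$, $q_a=-p_{-a}$ of Theorem \ref{th:3}.
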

\begin{proof}
Clearly $N_{m}=N_{m-1}+N_{m-3}$ shows $(p _{1},q _{1})=(1,0)$. And Lemma \ref{lem1} shows $(p_{a},q_{a})=(1,2)$, $(4,-3)$ and $(5,-2)$ for $a=2,3,4$, respectively. 

Let $m=ar+b$ ($1\leq b <a$) and $5\leq a\leq 8$. In order to express $N_{ar+b}$ as $p_{a}N_{a(r-1)+b}+q_{a}N_{a(r-2)+b}+N_{a(r-3)+b}$, we shall consider $a$ columns Narayana tables. Let us begin with $a=5$.
$$
\left[ 
\begin{array}{ccccc}
1 & 1 & 1 & 2 & 3 \\ 
4 & 6 & 9 & 13 & 19 \\ 
28 & 41 & 60 & 88 & 129 \\ 
189 & 277 & 406 & 595 & ...%
\end{array}%
\right] .
$$
Then it can be observed that, for instance
$$
\left\{ 
\begin{array}{c}
N_{16}=189=6\cdot 28+5\cdot 4+1=6N_{11}+5N_{6}+N_{1} \\ 
N_{17}=277=6\cdot 41+5\cdot 6+1=6N_{12}+5N_{7}+N_{2} \\ 
N_{18}=406=6\cdot 60+5\cdot 9+2=6N_{13}+5N_{8}+N_{3}
\end{array}
\right. .
$$
Thus, by assuming $N_{t}=6N_{t-5}+5N_{t-10}+N_{t-15}$ for all $t<m$, the induction hypothesis gives rise to 
\begin{align*}
N_{m}&=N_{m-1}+N_{m-3}\\
&=(6N_{m-6}+5N_{m-11}+N_{m-16})+(6N_{m-8}+5N_{m-13}+N_{m-18})\\
&=6(N_{m-6}+N_{m-8})+5(N_{m-11}+N_{m-13})+(N_{m-16}+N_{m-18}) \\
&=6N_{m-5}+5N_{m-10}+N_{m-15},
\end{align*}
so $(p_{5},q_{5})=(6,5)$. Moreover from the $6$ columns Narayana table
$$
\left[ 
\begin{array}{cccccc}
1 & 1 & 1 & 2 & 3 & 4 \\ 
6 & 9 & 13 & 19 & 28 & 41 \\ 
60 & 88 & 129 & 189 & 277 & ... \\ 
\end{array}%
\right]
$$
we can observe that, for instance 
$$
\left\{ 
\begin{array}{c}
N_{19}=595=10\cdot 60-6+1=10N_{13}-N_{7}+N_{1} \\ 
N_{20}=872=10\cdot 88-9+1=10N_{14}-N_{8}+N_{2} \\ 
N_{21}=1278=10\cdot 129-13+1=10N_{15}-N_{9}+N_{3}
\end{array}%
\right. .
$$
By assuming $N_{t}=10N_{t-6}-N_{t-12}+N_{t-18}$ for all $t<m$, we have 
\begin{align*}
N_{m}&=N_{m-1}+N_{m-3}\\
&=(10N_{m-7}-N_{m-13}+N_{m-19})+(10N_{m-9}-N_{m-15}+N_{m-21})\\
&=10(N_{m-7}+N_{m-9})-(N_{m-13}+N_{m-15})+(N_{m-19}+N_{m-21}) \\
&=10N_{m-6}+5N_{m-12}+N_{m-18},
\end{align*}
so $(p_{6},q_{6})=(10,-1)$. Therefore the observations and mathematical induction prove that the coefficients $(p_{a},q_{a})$ for $a=7,8$ satisfying $$N_{m}=p_{a}N_{m-a}+q_{a}N_{m-2a}+N_{m-3a}$$
are equal to $(15,-7)$ and $(21,6)$, respectively.
\end{proof}

\begin{remark}
We note that the subscript $m$ of $N_{m}$ could be negative, for example, in $6\ $columns Narayana table, $N_{15}=129=10N_{9}-N_{3}+N_{-3}$.
\end{remark}

\begin{definition}
Let $r\in \mathbb{Z}$. A sequence $n_{r}$ is called a Narayana type if it satisfies $n_{r}+n_{r+2}-n_{r+3}=0$ with any initials $n_{1},$ $n_{2}$ and $n_{3}$.
\end{definition}

\begin{theorem}\label{th:3}
For $1\leq a\leq 8$, let $(p_{a},q _{a})$ be the coefficient of the third order recurrence $N_{m}=p_{a}N_{m-a}+q_{a}N_{m-2a}+N_{m-3a}$.
Then,
\begin{enumerate}
\item For $1\leq s\leq 5$, $\{p_{s}\}$ is a Narayana type sequence $p_{s+3}=p_{s+2}+p_{s}$ with initials $p_{1}=p_{2}=1$ and $p_{3}=4$, while $\{q_{s}\}$\ satisfies $q_{s+3}=q_{s}-q_{s+1}$ with $q_{1}=0$, $q_{2}=2$ and $q_{3}=-3$.
\item Moreover, $p _{s}=N_{s}+3N_{s-2}$ and $q_{s}=-p_{-s}$ for $1\leq s\leq 8$, where $N_{s}$ is the $s$-th Narayana number.
\end{enumerate}
\end{theorem}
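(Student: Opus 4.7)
My plan is to establish part (2) first and derive part (1) as a structural corollary. The closed form $p_s = N_s + 3N_{s-2}$ I would verify by direct tabulation: compute both sides for $s = 1, \ldots, 8$ using the Narayana values from the introduction (including $N_{-1} = N_0 = 0$) and the $p$-values read off the table in Theorem \ref{teo1}. There are only eight rows to check, each a single addition. The identity $q_s = -p_{-s}$ I would handle analogously, extending $p_s = N_s + 3N_{s-2}$ formally to negative indices using the backward Narayana recurrence $N_{-(s+1)} = -N_{-(s-1)} + N_{-(s-2)}$ from the introduction, and then comparing with the tabulated $q_s$ for $s = 1, \ldots, 8$.

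Once the closed forms are in hand, part (1) drops out without any further combinatorics. For $p$, the Narayana recurrence $N_{r+3} = N_{r+2} + N_r$ applied to both $N_{s+3}$ and $N_{s+1}$ gives the one-line regrouping
$$p_{s+3} = N_{s+3} + 3N_{s+1} = (N_{s+2} + 3N_s) + (N_s + 3N_{s-2}) = p_{s+2} + p_s,$$
which is the Narayana-type recurrence claimed, with initial values $p_1 = p_2 = 1$, $p_3 = 4$ read off the table. For $q$, the very same regrouping shows that the closed-form relation $p_k = p_{k-1} + p_{k-3}$ holds for every integer $k$ (since the Narayana recurrence itself extends to all of $\Z$). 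Specializing to $k = -s$ yields $p_{-s-3} = p_{-s} - p_{-s-1}$, and multiplying by $-1$ converts this into $q_{s+3} = q_s - q_{s+1}$, exactly the recurrence stated for $\{q_s\}$.

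The only real obstacle is bookkeeping with the negative-index Narayana numbers: their signs oscillate, and one must use the convention of the introduction consistently. A short preliminary list of $N_k$ for $-10 \le k \le -1$ would eliminate this issue and reduce both verifications in (2) to purely arithmetic checks, after which the two recurrences in (1) are immediate structural consequences rather than independent computations.
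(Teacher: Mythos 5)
Your proposal is correct, and the underlying computation is the same as the paper's: everything ultimately rests on reading the eight pairs $(p_a,q_a)$ off the table in Theorem \ref{teo1} and on the negative-index Narayana values fixed in the introduction. The difference is in the logical organization. The paper verifies part (1) and part (2) as two independent finite checks: it observes the recurrences $p_{s+3}=p_{s+2}+p_s$ and $q_{s+3}=q_s-q_{s+1}$ directly on the lists $\{1,1,4,5,6,10,15,21\}$ and $\{0,2,-3,-2,5,-1,-7,6\}$, and separately matches $p_s$ against $N_s+3N_{s-2}$ and the extended table $\{p_s\}_{s=-1}^{-8}$ against $\{q_s\}$. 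You instead verify only the closed forms in part (2) by tabulation and then \emph{derive} part (1): the regrouping $p_{s+3}=N_{s+3}+3N_{s+1}=(N_{s+2}+3N_s)+(N_s+3N_{s-2})=p_{s+2}+p_s$ uses the Narayana recurrence rather than arithmetic on the list, and the $q$-recurrence falls out by specializing $p_{k}=p_{k-1}+p_{k-3}$ at $k=-s$ and negating. This buys a cleaner dependency structure (one tabulation instead of two, and an explanation of \emph{why} $\{q_s\}$ satisfies its reversed-looking recurrence, namely that it is the reflection of the Narayana-type recurrence for $\{p_s\}$ at negative indices), at the cost of having to be careful that the Narayana recurrence and the closed form genuinely extend to all of $\Z$ with the sign conventions of the introduction — which they do, and which the paper itself implicitly uses when it extends $\{p_s\}$ to negative $s$. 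Your caveat about bookkeeping the oscillating signs of $N_{-k}$ is the right one to flag; with the table $N_{-1},\dots,N_{-10}=0,1,0,-1,1,1,-2,0,3,-2$ in hand, all eight checks of $p_s=N_s+3N_{s-2}$ and all eight of $q_s=-p_{-s}$ go through.
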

\begin{proof}
By above Theorem \ref{teo1}, $$\{p_{s}\}_{s=1}^{8}=\{1,1,4,5,6,10,15,21\}$$ and $$\{q_{s}\}_{s=1}^{8}=\{0,2,-3,-2,5,-1,-7,6\}.$$ So it is easy to see that $p_{s+3}=p_{s+2}+p_{s}$ and $q_{s+3}=q_{s}-q_{s+1}$ for $1\leq s\leq 5$. Moreover, by means of Narayana numbers $N_{m}$, we notice
$$
p_{1}=1=N_{1}+3N_{-1},\ p_{2}=1=N_{2}+3N_{0},\ p_{3}=4=N_{3}+3N_{1},
$$
and $p_{4}=p_{3}+p_{1}=5=N_{4}+3N_{2}$, etc. So $p_{s}=N_{s}+3N_{s-2}$ for $1\leq s\leq 8.$ Now, by considering $N_{m}$ with negative $m$, the Narayana type sequence $\{p_{s}\}$ can be extended to
any $s\in \mathbb{Z}$, as follows.
$$
\begin{tabular}{l|llllllllllllll}
\hline
$s$ & $...$ & $-8$ & $-7$ & $-6$ & $-5$ & $-4$ & $-3$ & $-2$ & $-1$ & $0$ & $%
1$ & $2$ & $3$ & $...$ \\ \hline
$p_{s}$ & $...$ & $-6$ & $7$ & $1$ & $-5$ & $2$ & $3$ & $-2$ & $0$ & $3$
& $1$ & $1$ & $4$ & $...$ \\ \hline
\end{tabular}%
$$
Then by comparing $\{p_{s}\}_{s=-1}^{-8}=\{0,-2,3,2,-5,1,7,-6\}$ with $\{q_{s}\}_{s=1}^{8}$, we find that $q_{s}=-p_{-s}$ for $0\leq s\leq 8$.
\end{proof}

\section{The third order linear recurrence of $N_{m}$}
We shall generalize the findings in above section for $0\leq a\leq 8$ to any integer $a$.
\begin{theorem}\label{teo2}
Let $p_{a}=N_{a}+3N_{a-2}$ and $q_{a}=-p_{-a}$ for any $a\in \mathbb{Z}^{+}$. Then, any $m$-th Narayana number satisfies $N_{m}=p_{a}N_{m-a}+q_{a}N_{m-2a}+N_{m-3a}$ for every $a<m$.
\end{theorem}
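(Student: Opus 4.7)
The plan is to diagonalize the Narayana recurrence. Let $\alpha,\beta,\gamma$ denote the three (distinct) roots of the characteristic polynomial $x^{3}-x^{2}-1$; Vieta yields $\alpha+\beta+\gamma=1$, $\alpha\beta+\beta\gamma+\gamma\alpha=0$, and the key relation $\alpha\beta\gamma=1$. Any solution of the Narayana recurrence admits a Binet-type expression as a linear combination of $\alpha^{m},\beta^{m},\gamma^{m}$ valid for every $m\in\mathbb{Z}$, so the desired identity $N_{m}=p_{a}N_{m-a}+q_{a}N_{m-2a}+N_{m-3a}$ is equivalent to each root $x\in\{\alpha,\beta,\gamma\}$ satisfying $x^{3a}-p_{a}x^{2a}-q_{a}x^{a}-1=0$, which in turn says that $\alpha^{a},\beta^{a},\gamma^{a}$ are the three roots of $y^{3}-p_{a}y^{2}-q_{a}y-1$.

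Applying Vieta to this new cubic reduces the theorem to three power-sum identities:
$$p_{a}=\alpha^{a}+\beta^{a}+\gamma^{a},\qquad -q_{a}=(\alpha\beta)^{a}+(\beta\gamma)^{a}+(\gamma\alpha)^{a},\qquad 1=(\alpha\beta\gamma)^{a}.$$
The last relation is immediate from $\alpha\beta\gamma=1$. For the first, I would set $P_{a}:=\alpha^{a}+\beta^{a}+\gamma^{a}$ and note that, since each of $\alpha,\beta,\gamma$ satisfies $x^{m}=x^{m-1}+x^{m-3}$, the same Narayana recurrence $P_{m}=P_{m-1}+P_{m-3}$ is inherited by $P_{a}$. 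A direct check at $a=0,1,2$ gives $P_{0}=3=N_{0}+3N_{-2}$, $P_{1}=1=N_{1}+3N_{-1}$, and $P_{2}=(\alpha+\beta+\gamma)^{2}-2(\alpha\beta+\beta\gamma+\gamma\alpha)=1=N_{2}+3N_{0}$, so $P_{a}$ and $N_{a}+3N_{a-2}$ coincide for all $a\in\mathbb{Z}$ by uniqueness of solutions to the three-term recurrence extended to negative indices.

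For the middle identity, I would exploit $\alpha\beta\gamma=1$ to rewrite $(\alpha\beta)^{a}=\gamma^{-a}$ and analogously for the other pairs, so that the sum of products collapses to $\alpha^{-a}+\beta^{-a}+\gamma^{-a}=P_{-a}=p_{-a}$. Combined with the defining relation $q_{a}=-p_{-a}$, this gives exactly what is needed.

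The main technical point is justifying the Binet-type representation together with the passage between the recurrence identity and the polynomial identity on the roots; this rests on the simplicity of the roots of $x^{3}-x^{2}-1$ (its discriminant is nonzero) and on the linear independence of $\{\alpha^{m},\beta^{m},\gamma^{m}\}$ as sequences indexed by $\mathbb{Z}$. Once that equivalence is in hand, the rest of the argument is an elementary symmetric-function computation driven entirely by the identity $\alpha\beta\gamma=1$.
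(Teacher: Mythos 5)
Your proof is correct, but it takes a genuinely different route from the paper. The paper argues by induction on the step size $a$: it first obtains the recurrences for $1\leq a\leq 8$ by inspection of the $a$ columns Narayana tables (Theorem \ref{teo1}), notes that $p_{a+3}=p_{a+2}+p_{a}$ and $q_{a+3}=q_{a}-q_{a+1}$ follow formally from $p_{a}=N_{a}+3N_{a-2}$ and $q_{a}=-p_{-a}$, and then pushes the three-term recurrence from step $a$ to step $a+1$ by combining the induction hypothesis with the $a=2,3$ cases of Lemma \ref{lem1}; the decisive computation is only indicated there as ``long calculations.'' You instead diagonalize: writing $N_{m}$ in Binet form over the distinct roots $\alpha,\beta,\gamma$ of $x^{3}-x^{2}-1$ (discriminant $-31\neq 0$, so the roots are simple and nonzero), the claimed identity follows once $\alpha^{a},\beta^{a},\gamma^{a}$ are shown to be the roots of $y^{3}-p_{a}y^{2}-q_{a}y-1$, and Vieta converts this into the three power-sum identities you list. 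Your verification of $\alpha^{a}+\beta^{a}+\gamma^{a}=N_{a}+3N_{a-2}$ by checking $a=0,1,2$ (values $3,1,1$) and invoking uniqueness of solutions of the recurrence is sound, and the collapse of $(\alpha\beta)^{a}+(\beta\gamma)^{a}+(\gamma\alpha)^{a}$ to $P_{-a}=p_{-a}=-q_{a}$ via $\alpha\beta\gamma=1$ is exactly right. Your approach buys uniformity in $a$ and $m$ (no tables, no case analysis, and it supplies in full the step the paper leaves as a sketch) and it explains structurally why the trailing coefficient of every $a$-step recurrence is $1$, namely because it equals $(\alpha\beta\gamma)^{a}$; the paper's approach buys an argument that stays entirely inside integer arithmetic with no appeal to the characteristic roots. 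One small remark: you only need the implication from the polynomial identity on the roots to the identity for $N_{m}$ (which is immediate by linearity of the Binet expression), so the ``equivalence'' you assert is stronger than required, though harmless given the linear independence you cite.
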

\begin{proof}
It is due to above Theorem \ref{th:3}  if $0\leq a\leq 8$. Since $p_{a}=N_{a}+3N_{a-2}$ for all $a\in \mathbb{Z}^{+}$, then $\{p_{a}\}$ is a Narayana type sequence because 
\begin{align*}
p_{a}+p_{a+2} &=(N_{a}+3N_{a-2})+(N_{a+2}+3N_{a}) \\
&=(N_{a}+N_{a+2})+3(N_{a-2}+N_{a}) \\
&=N_{a+3}+3N_{a+1}=p_{a+3}.
\end{align*}

Similarly, since $q_{a}=-p_{-a}$ for all $a$, $\{q_{a}\}$ satisfies 
\begin{align*}
q_{a}-q _{a+1} &=-p_{-a}+p _{-(a+1)} \\
&=-(p_{-a-1}+p_{-a-3})+p_{-(a+1)} \\
&=-p_{-(a+3)}=q_{a+3}.
\end{align*}

We now suppose that the three order recurrence $N_{m}=p_{t}N_{m-t}+q_{t}N_{m-2t}+N_{m-3t}$ hold for all $t\leq a$. Since 
\begin{align*}
N_{m-(a-2)}&=N_{m-(a-1)}+N_{m-(a+1)}\\
N_{m-2(a-2)}&=N_{m-2(a-1)}+2N_{m-2a}+N_{m-2(a+1)}\\
N_{m-3(a-2)}&=4N_{m-3(a-1)}-3N_{m-3a}+N_{m-3(a+1)}.
\end{align*}
Then, by lemma \ref{lem1} and the mathematical induction with long calculations proves that 
\begin{align*}
p_{a+1}N_{m-(a+1)}&+q_{a+1}N_{m-2(a+1)}+N_{m-3(a+1)} \\
&=(p_{a}+p_{a-2})N_{m-(a+1)}+(q_{a-2}-q_{a-1})N_{m-2(a+1)}+N_{m-3(a+1)} \\
&=(p_{a}+p_{a-2})(N_{m-(a-2)}-N_{m-(a-1)}) \\
& \ \ +(q_{a-2}-q_{a-1})(N_{m-2(a-2)}-N_{m-2(a-1)}-2N_{m-2a}) \\
& \ \ +(N_{m-3(a-2)}-4N_{m-3(a-1)}+3N_{m-3a}) \\
&= N_{m}.
\end{align*}
\end{proof}

Theorem \ref{teo2} provides a good way to find huge Narayana numbers. For instance, for $40$-th Narayana number $N_{40}$, we may choose any $a$, say $a=10$, then $p_{10}=N_{10}+3N_{8}=46$ and $q_{10}=-p _{-10}=-13$, thus 
\begin{align*}
N_{40}&=p_{10}N_{40-10}+q_{10}N_{40-20}+N_{40-30}\\
&=46\cdot 39865-13\cdot 872+19 \\
&=1822473,
\end{align*}
a $7$ digit integer. On the other hand, if we take $a=8$ then $p_{8}=N_{8}+3N_{6}=21$ and $q_{8}=-p_{-8}=6$, so $P_{40}$ can be obtained by $N_{40}=p_{8}N_{32}+q_{10}N_{24}+N_{16}$. 

More identities for $p _{a}$ can be developed in terms of three successive Narayana numbers.

Now, for each $m \in \Z^{+}$, we define two sequences
\begin{equation}
P_{N,m}=N_{m}+N_{-m}\ \textrm{and}\ Q_{N,m}=N_{m}-N_{-m}.
\end{equation}
Then it is easy to have the table
$$
\begin{tabular}{l|llllllllllllll}
\hline
$m$ & $1$ & $2$ & $3$ & $4$ & $5$ & $6$ & $7$ & $8$ & $9$ & $10$ & $%
11$ & $12$ & $...$ \\ \hline\hline
$N_{m}$  & $1$ & $1$ & $1$ & $2$ & $3$ & $4$ & $6$ & $9$ & $%
13 $ & $19$ & $28$ & $41$ & $...$ \\ 
$N_{-m}$  & $0$ & $1$ & $0$ & $-1$ & $1$ & $1$ & $-2$ & $0$ & $3$ & $%
-2 $ & $-3$ & $5$ & $...$ \\ \hline
$P_{N,m}$ & $1$ & $2$ & $1$ & $1$ & $4$ & $5
$ & $4$ & $9$ & $16$ & $17$ & $25$ & $46$ & $...$ \\ 
$Q_{N,m}$ & $1$ & $0$ & $1$ & $3$ & $2$ & $3
$ & $8$ & $9$ & $10$ & $21$ & $31$ & $36$ & $...$ \\ \hline
\end{tabular}
$$
From the table, we notice $N_{8}=9=46-21-16$ or $N_{8}=P_{N,12}-Q_{N,10}-P_{N,9}$.

\begin{theorem}
Let $m\in\mathbb{Z}^{+}$. Then the sequences $\{N_{m}\}$ satisfy the relation 
$$N_{m}=P_{N,m+4}-Q_{N,m+2}-P_{N,m+1}.$$ Furthermore, $N_{m}=\frac{1}{2}\left(
P_{N,m}+Q_{N,m}\right) $ and $N_{-m}=\frac{1}{2}\left( P_{N,m}-Q_{N,m}\right)$.
\end{theorem}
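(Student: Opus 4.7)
The two auxiliary identities $N_m = \tfrac{1}{2}(P_{N,m}+Q_{N,m})$ and $N_{-m} = \tfrac{1}{2}(P_{N,m}-Q_{N,m})$ are immediate from the very definitions: adding and subtracting the formulas $P_{N,m} = N_m + N_{-m}$ and $Q_{N,m} = N_m - N_{-m}$ gives them in one line each, so I would dispose of them first.

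For the principal identity $N_m = P_{N,m+4} - Q_{N,m+2} - P_{N,m+1}$, the plan is to substitute the definitions of $P$ and $Q$ and then separate the positive-index Narayana numbers from the negative-index ones. The right-hand side becomes
\[
\bigl(N_{m+4} - N_{m+2} - N_{m+1}\bigr) + \bigl(N_{-(m+4)} + N_{-(m+2)} - N_{-(m+1)}\bigr),
\]
and I would show that the first parenthesis equals $N_m$ while the second parenthesis vanishes.

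For the first parenthesis, two applications of the defining recurrence $N_r = N_{r-1} + N_{r-3}$ do the job: first $N_{m+4} - N_{m+1} = N_{m+3}$, and then $N_{m+3} - N_{m+2} = N_m$. For the second parenthesis, the key observation is that the same recurrence $N_r = N_{r-1} + N_{r-3}$ is valid for \emph{every} integer $r$; this is equivalent to the relation $N_{-(s+1)} = -N_{-(s-1)} + N_{-(s-2)}$ stated in the introduction after the change of index $r = -(s-2)$, and it can be verified by induction using the initial values $N_0 = N_{-1} = 0$, $N_{-2}=1$. Applying this extended recurrence at $r = -(m+1)$ gives $N_{-(m+1)} = N_{-(m+2)} + N_{-(m+4)}$, so the second parenthesis is identically zero.

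The only non-automatic step is the extension of the recurrence to negative indices; once granted, everything else is telescoping cancellation, and I expect the whole argument to take only a few lines. A quick sanity check against the table already in the excerpt, namely $P_{N,12} - Q_{N,10} - P_{N,9} = 46 - 21 - 16 = 9 = N_8$, confirms that the coefficients and index shifts are correct.
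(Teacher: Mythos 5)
Your proof is correct and rests on the same two ingredients as the paper's: the defining recurrence applied at positive indices, and the same recurrence extended to negative indices (which, as you note, is exactly the paper's backward definition $N_{-(s+1)}=-N_{-(s-1)}+N_{-(s-2)}$ after reindexing). The paper organizes the computation as a derivation of the recursion $P_{N,m}=P_{N,m-3}+Q_{N,m-2}+N_{m-4}$ and then solves for $N_{m-4}$, whereas you verify the stated identity directly by splitting the right-hand side into a positive-index block that telescopes to $N_{m}$ and a negative-index block that vanishes; this is only a cosmetic reorganization of the same argument.
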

\begin{proof}
It is easy to see that 
\begin{eqnarray*}
P_{N,m} &=&N_{m}+N_{-m} \\
&=&\left( N_{m-1}+N_{m-3}\right) +\left( -N_{-(m-2)}+N_{-(m-3)}\right) \\
&=&P_{N,m-3}+N_{m-1}-N_{-(m-2)} \\
&=&P_{N,m-3}+(N_{m-2}+N_{m-4})-N_{-(m-2)} \\
&=&P_{N,m-3}+Q_{N,m-2}+N_{m-4}.
\end{eqnarray*}
Hence $N_{m}=P_{N,m+4}-Q_{N,m+2}-P_{N,m+1}$.
\end{proof}

\begin{theorem}\label{teo4}
Let $m=ar+b$ with $1\leq b\leq a<m$. Let $(p_{a},q_{a})$ be the coefficient of the third order recurrence $N_{m}=p_{a}N_{m-a}+q_{a}N_{m-2a}+N_{m-3a}$. Then, $N_{m}$ is a linear combination of any three consecutive entries of $b$-th column in the $a$ columns Narayana table. Furthermore, $N_{m}$ is expressed by the first three terms $N_{2a+b}$, $N_{a+b}$ and $N_{b}$ of $b$-th column.
\end{theorem}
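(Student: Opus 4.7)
The crucial observation is that the $b$-th column of the $a$ columns Narayana table consists precisely of the terms of the subsequence $M_{r}:=N_{ar+b}$, $r=0,1,2,\ldots$. Under this renaming, Theorem \ref{teo2} applied at $m=ar+b$ reads
$$M_{r}=p_{a}M_{r-1}+q_{a}M_{r-2}+M_{r-3},\qquad r\geq 3,$$
so $\{M_{r}\}$ itself satisfies a linear third order recurrence with constant coefficients $p_{a},q_{a},1$. This recurrence already expresses $N_{m}=M_{r}$ as a $\Z$-linear combination of the three consecutive column entries $N_{(r-3)a+b}$, $N_{(r-2)a+b}$, $N_{(r-1)a+b}$, which settles the first assertion for the top triple.

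To pass to an arbitrary triple of consecutive column entries $(M_{k},M_{k+1},M_{k+2})$ with $0\leq k\leq r-3$, I would proceed by downward induction on $k$. Starting from an already obtained representation $M_{r}=\alpha M_{k+3}+\beta M_{k+2}+\gamma M_{k+1}$, substitute the recurrence $M_{k+3}=p_{a}M_{k+2}+q_{a}M_{k+1}+M_{k}$ to obtain
$$M_{r}=(\alpha p_{a}+\beta )M_{k+2}+(\alpha q_{a}+\gamma )M_{k+1}+\alpha M_{k},$$
a linear combination of the next, lower triple. Iterating until $k=0$ gives the second assertion: $N_{m}=M_{r}$ is expressed in terms of the first three column entries $M_{0}=N_{b}$, $M_{1}=N_{a+b}$ and $M_{2}=N_{2a+b}$.

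There is no genuine obstacle; Theorem \ref{teo2} carries all the weight and both parts collapse into a single routine induction with the substitution recorded above. The only bookkeeping needed is to verify that each downward substitution is legitimate, i.e. that $k+3\leq r$ and that the index $ka+b$ keeps Theorem \ref{teo2} applicable, both of which follow automatically from $k\geq 0$ and $r\geq 3$. If desired, one can even read off the explicit evolution of the coefficients $(\alpha,\beta,\gamma)$ under each step and observe that they are themselves governed by a Narayana-type third order recurrence in the variable $k$, giving a closed form for the coefficients of $N_{b}$, $N_{a+b}$ and $N_{2a+b}$ in terms of $p_{a}$ and $q_{a}$.
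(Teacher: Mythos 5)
Your proposal is correct and follows essentially the same route as the paper: both repeatedly substitute the recurrence $N_{a(k+3)+b}=p_{a}N_{a(k+2)+b}+q_{a}N_{a(k+1)+b}+N_{ak+b}$ from Theorem \ref{teo2} to slide the triple of column entries down to $N_{2a+b}$, $N_{a+b}$, $N_{b}$, with the identical coefficient update $(\alpha,\beta,\gamma)\mapsto(\alpha p_{a}+\beta,\ \alpha q_{a}+\gamma,\ \alpha)$. Your write-up is in fact slightly cleaner, since the paper's displayed update step contains a typographical slip (the last term should read $\alpha N_{a(s-1)+b}$ rather than $\alpha N_{as+b}$), which your version gets right.
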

\begin{proof}
Let $N_{m}=p_{a}N_{m-a}+q_{a}N_{m-2a}+N_{m-3a}$ in Theorem \ref{teo2}. Then,
\begin{eqnarray*}
N_{ar+b} &=&p_{a}N_{a(r-1)+b}+q_{a}N_{a(r-2)+b}+N_{a(r-3)+b} \\
&=&p_{a}\left( p_{a}N_{a(r-2)+b}+q_{a}N_{a(r-3)+b}+N_{a(r-4)+b}\right) \\
&&\ \ +q{a}N_{a(r-2)+b}+N_{a(r-3)+b} \\
&=&\left( p_{a}^{2}+q_{a}\right) N_{a(r-2)+b}+\left(p_{a}q_{a}+1\right) N_{a(r-3)+b}+p_{a}N_{a(r-4)+b}.
\end{eqnarray*}
Hence after $s$ step (with $0<s<r$), if we write
$$N_{ar+b}=\alpha N_{a(s+2)+b}+\beta N_{a(s+1)+b}+\gamma N_{as+b},$$ with $\alpha, \beta,\gamma \in \mathbb{Z}$. Then, in the next step we have $N_{ar+b}=(p_{a}\alpha +\beta)N_{a(s+1)+b}+(q_{a}\alpha+\gamma) N_{as+b}+ \alpha N_{as+b}$. Continue this process to reach $s=1$, then it follows that $N_{m}$ is a linear combination of $N_{2a+b}$, $N_{a+b}$ and $N_{b}$.
\end{proof}

For example, for $N_{38}$ we may take any $a<38$, say $a=7$. Since $(p_{7},q_{7})=(15,-7)$ by Theorem \ref{teo1}, $N_{38}$ can be obtained easily by Theorem \ref{teo4} that 
\begin{eqnarray*}
N_{38} &=&15N_{31}-7N_{24}+N_{17} \\
&=&(15^{2}-7)N_{24}+(15\cdot (-7)+1)N_{17}+15N_{10} \\
&=&218N_{24}-104N_{17}+15P_{10} \\
&=&218\left( 15N_{17}-7N_{10}+N_{3}\right) -104N_{17}+15N_{10} \\
&=&3166N_{17}-1511N_{10}+218N_{3} \\
&=&3166\cdot 277-1511\cdot 19+218\cdot 1 \\
&=&848491.
\end{eqnarray*}

However, since $N_{m}$ is composed of $N_{m-a}$, $N_{m-2a}$ and $N_{m-3a}$, it may be better to choose $a\approx \frac{m}{3}$. Indeed if we take $\frac{38}{3}\approx 12=a$, then $N_{38}=p_{12}N_{26}-p_{-12}N_{14}+N_{2}$ and the last term $N_{2}=1$ is known easily.

\begin{remark}
Assume the same context $(p_{a},q_{a})$ as before. If $m=3a$, then $N_{m}=p_{\frac{m}{3}}N_{\frac{2m}{3}}+q_{\frac{m}{3}}N_{\frac{m}{3}}$ since $N_{0}=0$. In the other hand, if $m=3a+1$ or $m=3a+2$ and $N_{1}=N_{2}=1$, it follows that 
\begin{equation*}
N_{m}=p_{\left\lfloor \frac{m}{3}\right\rfloor }N_{\left\lfloor \frac{2m}{3}\right\rfloor +1}+q_{\left\lfloor \frac{m}{3}\right\rfloor
}N_{\left\lfloor \frac{m}{3}+\frac{1}{2}\right\rfloor +1}+1,
\end{equation*}
where $\left\lfloor x \right\rfloor$ is the floor function of $x$.
\end{remark}
For example, if $m=26,$ we have $N_{26}=p_{8}N_{18}+q_{8}N_{10}+1=8641=21\cdot 406+6\cdot 19+1$. If $m=36,$ we have $N_{36}=p_{12}N_{24}+q_{12}N_{12}$.

\section{Partial sum of Narayana numbers in a row}

Consider $N_{ar+b}$ ($r\geq 0$ and $1\leq b\leq a$) as an entry placed at the $(r+1)$-th row and $b$-th column in the table, and let
\begin{equation*}
S_{N,r}^{(a,b)}=\sum_{k=0}^{r}N_{ak+b}=N_{b}+N_{a+b}+N_{2a+b}+\cdots +N_{ar+b}
\end{equation*}
be the partial sum of $r+1$ entries of $b$-th column.

\begin{theorem}\label{teo5}
For $r\geq 0$, we have 
\begin{equation}\label{s2}
S_{N,r}^{(4,0)}=\sum_{k=0}^{r}N_{4k}=\frac{1}{3}\left(N_{4(r+1)}-N_{4r}+N_{4(r-1)}-1\right).
\end{equation}
\end{theorem}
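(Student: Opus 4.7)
The plan is to prove the identity by induction on $r\geq 0$, using the four-step recurrence $N_m = 5N_{m-4} - 2N_{m-8} + N_{m-12}$ from Lemma \ref{lem1} (equivalently the $a=4$ case of Theorem \ref{teo1}), which is valid for every $m\in\Z$ under the negative-index convention. Equivalently, the subsequence $M_k := N_{4k}$ satisfies $M_{k+1} = 5M_k - 2M_{k-1} + M_{k-2}$, and this is exactly the input that makes the induction close.

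For the base case $r=0$, I compute directly
\[
\tfrac{1}{3}\bigl(N_{4} - N_{0} + N_{-4} - 1\bigr) = \tfrac{1}{3}\bigl(2 - 0 + (-1) - 1\bigr) = 0 = N_0 = S_{N,0}^{(4,0)},
\]
where $N_{-4} = -1$ is obtained from the extension $N_{-(s+1)} = -N_{-(s-1)} + N_{-(s-2)}$ recalled in the introduction.

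For the inductive step, assuming the formula at $r$, I would add $3N_{4(r+1)}$ to both sides of
\[
3\, S_{N,r}^{(4,0)} = N_{4(r+1)} - N_{4r} + N_{4(r-1)} - 1
\]
to obtain
\[
3\, S_{N,r+1}^{(4,0)} = 4N_{4(r+1)} - N_{4r} + N_{4(r-1)} - 1.
\]
The target value at $r+1$ is $N_{4(r+2)} - N_{4(r+1)} + N_{4r} - 1$, so the two agree if and only if
\[
N_{4(r+2)} = 5N_{4(r+1)} - 2N_{4r} + N_{4(r-1)},
\]
which is precisely the $a=4$ recurrence evaluated at $m = 4(r+2)$.

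There is no genuine obstacle here: the identity is tailored to the coefficients $(p_4,q_4) = (5,-2)$, so the induction closes mechanically. A slightly cleaner variant is a telescoping argument: verifying that the difference of the right-hand sides at consecutive $r$'s equals $3N_{4r}$ is again a direct consequence of the same $a=4$ recurrence, and together with the value at $r=0$ this forces the claimed formula without a formal induction. The only mild subtlety, in either approach, is the appearance of the negative-subscripted term $N_{-4}$ in the base case.
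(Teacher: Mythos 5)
Your proof is correct and follows essentially the same route as the paper: induction on $r$ driven by the $a=4$ recurrence $N_{m}=5N_{m-4}-2N_{m-8}+N_{m-12}$ of Lemma \ref{lem1}. The only difference is where the induction is anchored --- you start at $r=0$ using the negative-index value $N_{-4}=-1$, whereas the paper verifies $r=3$ directly (so its induction strictly covers only $r\geq 3$, leaving $r=0,1,2$ implicit); your choice actually covers the full range $r\geq 0$ claimed in the statement.
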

\begin{proof}
Let $r=3$, Lemma \ref{lem1} shows $N_{4(4)}=5N_{4(3)}-2N_{4(2)}+N_{4}$, so we have 
\begin{eqnarray*}
3\sum_{k=0}^{3}N_{4k}&=&3N_{4(3)}+3N_{4(2)}+3N_{4}+3N_{0} \\
&=&(5N_{4(3)}-2N_{4(2)}+N_{4})-2N_{4(3)}+5N_{4(2)}+2N_{4}+3N_{0}  \\
&=&N_{4(4)}-2N_{4(3)}+5N_{4(2)}+2N_{4}\\
&=&N_{4(4)}-2N_{4(3)}+(N_{4(3)}+2N_{4})+2N_{4}\\
&=&N_{4(4)}-N_{4(3)}+N_{4(2)}-1,
\end{eqnarray*}
since $4N_{4}=N_{8}-1$. Assume $3\sum_{k=0}^{r}N_{4k}=N_{4(r+1)}-N_{4r}+N_{4(r-1)}-1$ is true. Then it follows that 
\begin{align*}
N_{4(r+2)}-&N_{4(r+1)}+N_{4r}-1\\
&=\left(5N_{4(r+1)}-2N_{4r}+N_{4(r-1)}\right) -N_{4(r+1)}+N_{4r}-1 \\
&=4N_{4(r+1)}-N_{4r}+N_{4(r-1)}-1\\
&=3N_{4(r+1)}+N_{4(r+1)}-N_{4r}+N_{4(r-1)}-1\\
&=3N_{4(r+1)}+3\sum_{k=0}^{r}N_{4k}=3\sum_{k=0}^{r+1}N_{4k}.
\end{align*}
\end{proof}

\begin{remark}
Theorem \ref{teo5} is a sum of $4k$ subscripted Narayana numbers. But in our context, Eq. (\ref{s2}) can be explained as a sum of entries of $4$-th column in the $4$ columns Narayana table. We now shall study the sum of entries of any $b$-th column in the $4$ columns Narayana table.
\end{remark}

\begin{theorem}
Consider $S_{N,r}^{(4,b)}$\ with $1\leq b\leq 4$. Then for $r\geq 3$,%
\begin{equation}\label{s5}
S_{N,r}^{(4,b)}=\left\{ 
\begin{array}{ccc}
5S_{N,r-1}^{(4,b)}-2S_{N,r-2}^{(4,b)}+S_{N,r-3}^{(4,b)}-1 & if & b=1, \\ 
5S_{N,r-1}^{(4,b)}-2S_{N,r-2}^{(4,b)}+S_{N,r-3}^{(4,b)}+1 & if & b=2,4, \\ 
5S_{N,r-1}^{(4,b)}-2S_{N,r-2}^{(4,b)}+S_{N,r-3}^{(4,b)}+2& if & b=3. 
\end{array}
\right.
\end{equation}
\end{theorem}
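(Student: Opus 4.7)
The plan is to derive the recurrence directly from Lemma~\ref{lem1}, specifically the identity $N_m = 5N_{m-4} - 2N_{m-8} + N_{m-12}$ (equivalently, the $a=4$ line of Theorem~\ref{teo1}), applied termwise to $N_{4k+b}$ and then summed over $k$. No induction is needed: a one-step re-indexing argument suffices.

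For fixed $b \in \{1,2,3,4\}$ and each $k \geq 3$, Lemma~\ref{lem1} with $m = 4k+b$ gives
\[
N_{4k+b} = 5N_{4(k-1)+b} - 2N_{4(k-2)+b} + N_{4(k-3)+b}.
\]
I would sum this identity over $k = 3, 4, \ldots, r$ and re-index the three right-hand sums via $j = k-1$, $j = k-2$, $j = k-3$. Each sum then matches, up to a few missing initial terms, one of $S_{N,r}^{(4,b)}, S_{N,r-1}^{(4,b)}, S_{N,r-2}^{(4,b)}, S_{N,r-3}^{(4,b)}$. After collecting the boundary terms on one side, one obtains
\[
S_{N,r}^{(4,b)} = 5S_{N,r-1}^{(4,b)} - 2S_{N,r-2}^{(4,b)} + S_{N,r-3}^{(4,b)} + C_b,
\]
with $C_b = -2N_b - 4N_{4+b} + N_{8+b}$ encoding all the discrepancies produced by the re-indexing.

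The remaining step is the purely numerical evaluation of $C_b$ for $b=1,2,3,4$ using the known values $\{N_1,\ldots,N_{12}\}=\{1,1,1,2,3,4,6,9,13,19,28,41\}$: one gets $C_1 = -2 - 12 + 13 = -1$, $C_2 = -2 - 16 + 19 = 1$, $C_3 = -2 - 24 + 28 = 2$, and $C_4 = -4 - 36 + 41 = 1$, which are exactly the constants in Eq.~(\ref{s5}). The only delicate point is the lower-limit bookkeeping in the re-indexing: $S_{N,r}^{(4,b)}$ starts at $k=0$ while the identity is only valid for $k \geq 3$, so one must carefully subtract $N_b + N_{4+b} + N_{8+b}$ from the left side and analogous partial initial segments from each right-hand sum. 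This is the sole place an error could creep in; once handled, the formula for $C_b$ is immediate and the rest is arithmetic.
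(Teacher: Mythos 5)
Your argument is correct, and it is a genuinely different route from the paper's. The paper proves Eq.~(\ref{s5}) by induction on $r$: it first verifies the base case $r=3$ for each $b$ from an explicit table of the values $S_{N,r}^{(4,b)}$, and then carries out the induction step (written out only for $b=2,4$) by adding $N_{4(r+1)+b}=5N_{4r+b}-2N_{4(r-1)+b}+N_{4(r-2)+b}$ to the inductive hypothesis. You instead sum the termwise recurrence of Lemma~\ref{lem1} over $k=3,\dots,r$ and track the boundary terms, which yields the closed form
\[
S_{N,r}^{(4,b)}=5S_{N,r-1}^{(4,b)}-2S_{N,r-2}^{(4,b)}+S_{N,r-3}^{(4,b)}+\bigl(N_{8+b}-4N_{4+b}-2N_{b}\bigr),
\]
and your bookkeeping is right: the left side loses $N_b+N_{4+b}+N_{8+b}$, the three reindexed sums are $S_{N,r-1}^{(4,b)}-N_b-N_{4+b}$, $S_{N,r-2}^{(4,b)}-N_b$, and $S_{N,r-3}^{(4,b)}$, and the four numerical evaluations $C_1=-1$, $C_2=1$, $C_3=2$, $C_4=1$ match the statement. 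What your approach buys is twofold: it \emph{explains} the constants rather than checking them against a table (the paper's base case at $r=3$ is essentially an unexplained numerical coincidence from its point of view), and it generalizes verbatim to any $a$ with the coefficients $(p_a,q_a)$ of Theorem~\ref{teo1}, giving $C_b=N_{2a+b}(1-p_a)+N_{a+b}(1-p_a-q_a)+N_b(1-p_a-q_a-1)$-type boundary expressions with no new work. The paper's induction is shorter to write but proves less.
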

\begin{proof}
The $4$ columns Narayana table makes the table of $S_{N,r}^{(4,b)}$ as follows. 
\begin{equation*}
\left[ 
\begin{tabular}{llll}
$1$ & $1$ & $1$ & $2$ \\ 
$3$ & $4$ & $6$ & $9$ \\ 
$13$ & $19$ & $28$ & $41$ \\ 
$60$ & $88$ & $129$ & $189$ \\ 
$277$ & $406$ & $595$ & $\cdots $%
\end{tabular}%
\right] \text{ and 
\begin{tabular}{l|llll}
\hline
$r$ & $S_{N,r}^{(4,1)}$ & $S_{N,r}^{(4,2)}$ & $S_{N,r}^{(4,3)}$ & $S_{N,r}^{(4,4)}$
\\ \hline
$0$ & $1$ & $1$ & $1$ & $2$ \\ 
$1$ & $4$ & $5$ & $7$ & $11$ \\ 
$2$ & $17$ & $24$ & $35$ & $52$ \\ 
$3$ & $77$ & $112$ & $164$ & $241$ \\ 
$4$ & $354$ & $518$ & $759$ & $\cdots $ \\ \hline
\end{tabular}%
.}
\end{equation*}%
When $r=4$ and $b=1$, we notice $77=\left( 5\cdot 17-2\cdot 4+1\right)-1$, and it
can be written as 
\begin{equation*}
S_{N,3}^{(4,1)}=5S_{N,2}^{(4,1)}-2S_{N,1}^{(4,1)}+S_{N,0}^{(4,1)}-1.
\end{equation*}%
Similar to this, we observe that 
\begin{equation*}
\left\{ 
\begin{array}{c}
S_{N,3}^{(4,2)}=5S_{N,2}^{(4,2)}-2S_{N,1}^{(4,2)}+S_{N,0}^{(4,2)}+1,\\ 
S_{N,3}^{(4,3)}=5S_{N,2}^{(4,3)}-2S_{N,1}^{(4,3)}+S_{N,0}^{(4,3)}+2,\\ 
S_{N,3}^{(4,4)}=5S_{N,2}^{(4,4)}-2S_{N,1}^{(4,4)}+S_{N,0}^{(4,4)}+1.%
\end{array}%
\right. .
\end{equation*}%
Furthermore, assume that $S_{N,r}^{(4,b)}=5S_{N,r-1}^{(4,b)}-2S_{N,r-2}^{(4,b)}+S_{N,r-3}^{(4,b)}+1$ if $b=2,4$. Then Theorem \ref{teo1} together with induction hypothesis yields
\begin{align*}
S_{N,r+1}^{(4,b)}&=\sum_{k=0}^{r+1}N_{4k+b}=S_{N,r}^{(4,b)}+N_{4(r+1)+b} \\
&=(5S_{N,r-1}^{(4,b)}-2S_{N,r-2}^{(4,b)}+S_{N,r-3}^{(4,b)}+1)+(5N_{4r+b}-2N_{4(r-1)+b}+N_{4(r-2)+b}) \\
&=5S_{N,r}^{(4,b)}-2S_{N,r-1}^{(4,b)}+S_{N,r-2}^{(4,b)}+1,
\end{align*}
this proves the Eq. (\ref{s5}). Similarly, other relationships are followed.
\end{proof}

\end{document}